\newcommand{\N}{\mathbb N}
\newcommand{\Z}{\mathbb Z}
\newcommand{\twosum}[2]{\sum_{\substack{#1\\#2}}}
\newtheorem{defn}{Definition}[section]
\newtheorem{thm}[defn]{Theorem}
\newtheorem{lem}[defn]{Lemma}
\title{Average Bounds for Kloosterman Sums Over Primes}
\author{A.J. Irving\\
Mathematical Institute, Oxford}
\date{}
\begin{document}
\maketitle

\section{Introduction}

In this paper we consider bounds for sums of the form 
$$S_q(a;x)=\twosum{p\sim x}{(p,q)=1}e(\frac{a\overline p}{q})$$
where $p$ runs over primes and $p\sim x$ denotes the inequality $x\leq p<2x$.  These sums may be bounded trivially by $x$. If $(a,q)=1$ then we conjecture that for any $\epsilon>0$ a bound of 
$$S_q(a;x)\ll_\epsilon x^{\frac12+\epsilon}q^\epsilon$$   
should be true.  This conjecture, however, seems to be far out of reach of current methods.

A bound for $S_q(a;x)$ is given by Garaev, \cite{garaev}, in the case that $q$ is prime.  He shows that for $x<q$ we have, for any $\epsilon>0$,  
$$\max_{(a,q)=1}|S_q(a;x)|\ll_\epsilon (x^{\frac{15}{16}}+x^{\frac23}q^{\frac14})q^\epsilon.$$
This gives us a nontrivial estimate for the sum provided that $x\geq
q^{\frac34+\delta}$ for some $\delta>0$.  For $q\geq x\geq
q^{\frac{16}{17}+\delta}$  Garaev uses this bound to prove an
asymptotic for the number of prime solutions, $p_1,p_2,p_3$ with $p_i\in [0,x]$, to the congruence 
$$p_1(p_2+p_3)\equiv \lambda\pmod q.$$

Fouvry and Shparlinski, \cite{foushpar}, generalise Garaev's  bound to arbitrary $q$ and the larger range $q^{\frac34}\leq x\leq q^{\frac43}$.  They also show that if we average over $q$ then a sharper bound is possible.  Specifically, their Theorem 5 states that if $Q^{\frac32}\geq x\geq 2$ then for every $\epsilon>0$ we have 
\begin{equation}\label{foushparaverage}
\sum_{q\sim Q}\max_{(a,q)=1}|S_q(a;x)|\ll_\epsilon (Q^{\frac{13}{10}}x^{\frac35}+Q^{\frac{13}{12}}x^{\frac56})Q^\epsilon.
\end{equation}
This bound is  nontrivial when $x\geq Q^{\frac34+\delta}$.  Fouvry and Shparlinski use their estimates to study multiplicative properties of the set 
$$\{p_1p_2+p_1p_3+p_2p_3:p_i\sim x,p_i\text{ prime }\}.$$
They show, for example, that for $x$ sufficiently large this set contains numbers with a prime factor exceeding $x^{1.10028}$.  Baker, \cite[Theorem 2]{baker}, has recently improved the bound (\ref{foushparaverage}) in the range $Q^{\frac12}\leq x\leq 2Q$. His result is 
 \begin{equation}\label{bakeraverage}
\sum_{q\sim Q}\max_{(a,q)=1}|S_q(a;x)|\ll_\epsilon (Q^{\frac{11}{10}}x^{\frac45}+Qx^{\frac{11}{12}})Q^\epsilon.
\end{equation}
This  is nontrivial for $Q\geq x^{\frac12+\delta}$ and sharper than (\ref{foushparaverage}) when $x\leq Q^{1-\delta}$.  Baker applies this bound to the same ternary form problem as Fouvry and Shparlinski; combining it with a variant on the sieve argument he improves $1.10028$ to $1.1673$.

We are motivated by a new application of these sums to Diophantine
approximation.  For this application we need only consider average
bounds.  We will show that by generalising the arguments from
\cite{foushpar} it is possible to obtain a sharper estimate than that
given in (\ref{foushparaverage}).
\begin{thm}\label{thm1}
For any $\epsilon>0$ we have 
$$\sum_{q\sim Q}\max_{(a,q)=1}|S_q(a;x)|\ll_\epsilon (Q^{\frac54}x^{\frac58}+Qx^{\frac{9}{10}}+Q^{\frac76}x^{\frac{13}{18}})Q^\epsilon$$
for $Q^{\frac32}\geq x\geq Q^{\frac23}$.  
\end{thm}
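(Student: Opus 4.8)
The plan is to generalise the Fouvry--Shparlinski treatment behind (\ref{foushparaverage}) by combining a combinatorial decomposition of the characteristic function of the primes with the completion method and Weil's bound, while carrying the average over $q$ through every step. First I would remove the primality condition by applying Vaughan's identity (or Heath--Brown's identity) to $\Lambda(n)$, writing $S_q(a;x)$, up to a harmless $\log$, as a bounded combination of \emph{Type I} sums $\sum_{m\le M}a_m\sum_{n}e(a\overline{mn}/q)$, in which $a_m$ is a divisor-bounded coefficient and $n$ runs over an interval, and \emph{Type II} sums $\sum_{m}\sum_{n}a_m b_n e(a\overline{mn}/q)$ with $m\sim M$, $n\sim N$ and $MN\asymp x$. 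To deal with the maximum over $a$ I would fix for each $q$ a residue $a_q$ with $(a_q,q)=1$ attaining the maximum together with a unimodular $\eta_q$, so that the quantity to be bounded becomes the single linear form $\sum_{q\sim Q}\eta_q S_q(a_q;x)$. The observation that lets the average over $q$ survive the maximum is that, after completing an inner variable, the resulting complete Kloosterman sums are estimated by $|S(c,h;q)|\ll\tau(q)(c,h,q)^{1/2}q^{1/2}$, and since $(a_q,q)=1$ the relevant $\gcd$ never involves $a_q$; thus all bounds are \emph{uniform} in the adversarial choice $a_q$.

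For the Type I sums I would complete the smooth variable $n$ by additive characters, turning the inner sum into $q^{-1}\sum_{|h|\le q/2}\widehat{I}(h)\,S(a_q\overline m,h;q)$ with $\widehat I(h)\ll\min(x/m,q/|h|)$. Here $(a_q\overline m,q)=1$, so Weil gives the $\gcd$ $(a_q\overline m,h,q)=(h,q)$, and the $h=0$ term is a Ramanujan sum of size $O(1)$; summing $\min(x/m,q/|h|)(h,q)^{1/2}$ over $h$ costs only $q^{1+\epsilon}$, so each pair $(m,q)$ contributes $\ll q^{1/2+\epsilon}$. Summing over $m\le M$ and $q\sim Q$ then yields a Type I bound of the shape $M Q^{3/2+\epsilon}$, which is satisfactory provided $M$ is not too large.

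The heart of the argument is the Type II estimate. Collapsing the sum as $\sum_n b_n(\sum_{q,m}\cdots)$ is awkward because it produces two moduli, so instead I would apply Cauchy--Schwarz in the pair $(q,m)$, eliminating $a_m$ at the cost of a factor $(QM)^{1/2}$ and leaving $\sum_{q\sim Q}\sum_{m\sim M}\bigl|\sum_n b_n e(a_q\overline{mn}/q)\bigr|^2$. Opening the square and summing over $m$ turns the remaining variable into another incomplete Kloosterman sum $\sum_{m\sim M}e(a_q\overline m(\overline{n_1}-\overline{n_2})/q)$, whose frequency vanishes exactly on the diagonal $n_1\equiv n_2\pmod q$. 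The diagonal contributes $\asymp Q x$; the off-diagonal I would again attack by completion and Weil, noting that the governing $\gcd$ is $(n_1-n_2,h,q)$, once more independent of $a_q$. Carrying out the sums over $h$, over $q\sim Q$, and over the difference $d=n_1-n_2$ (whose gcd with $q$ is small on average) produces two further terms; balancing these against one another and against the Type I bound by choosing the Vaughan cut-off is what should give the three terms $Q^{5/4}x^{5/8}$, $Qx^{9/10}$ and $Q^{7/6}x^{13/18}$ in the stated range $Q^{2/3}\le x\le Q^{3/2}$.

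The step I expect to be the main obstacle is precisely this off-diagonal analysis. Two difficulties must be handled with care. First, when $N$ is comparable to or larger than $Q$ the congruence $n_1\equiv n_2\pmod q$ has solutions with $n_1\ne n_2$, producing ``near-diagonal'' contributions that must be separated and estimated, and this constrains how large $N$ (equivalently how small $M$) may be taken. Second, the savings genuinely rely on averaging over $q$ and over $d=n_1-n_2$ the factors $(d,h,q)^{1/2}$ coming from Weil's bound, so one must show these gcd's are small on average rather than merely bounding them by $q^{1/2}$. It is the quality of this off-diagonal bound, together with the freedom to optimise the Type I / Type II split across $Q^{2/3}\le x\le Q^{3/2}$, that determines whether one merely recovers (\ref{foushparaverage}) or reaches the sharper exponents $5/8$, $9/10$, $13/18$; extracting that extra cancellation is the crux of the proof.
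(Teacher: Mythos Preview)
Your Type I treatment and the overall Vaughan set-up match the paper, but the Type II argument misses the idea that actually produces the exponents $5/8$, $9/10$, $13/18$. You apply Cauchy--Schwarz in $(q,m)$ and then complete the resulting $m$-sum via Weil; this is essentially the second-moment mechanism already present in Fouvry--Shparlinski, and on its own it leads back to a bound of the strength of (\ref{foushparaverage}) rather than to Theorem~\ref{thm1}. The paper's point, stated in the introduction, is precisely to go beyond the second moment: one applies H\"older with exponent $2k$ (not Cauchy) to the sum over $l$, then \emph{extends} $l$ to a full set of residues modulo $q$ (this uses $L\le Q$, hence the hypothesis $x\le Q^{3/2}$) and invokes orthogonality. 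No completion and no Weil bound enter the Type II estimate at all. What remains is the count $J^{(k)}_M(q)$ of solutions of
\[
\overline m_1+\cdots+\overline m_k\equiv \overline m_{k+1}+\cdots+\overline m_{2k}\pmod q,
\]
and the genuine use of the $q$-average is Lemma~\ref{cayleycong}: summing $J^{(k)}_M(q)$ over $q\sim Q$ splits into the ``rational'' diagonal $\sum 1/m_i=\sum 1/m_{k+i}$, handled by the squarefull-number trick of Lemma~\ref{cayley}, and an off-diagonal where $q$ divides a nonzero integer of size $\ll M^{2k}$. Feeding this into the H\"older step with $k=2$ on the range $x^{2/5}\le L\le x^{1/2}$ gives the terms $Q^{5/4}x^{5/8}$ and $Qx^{9/10}$, and with $k=3$ on $x^{3/5}\le L\le x/U$ gives $Q^{7/6}x^{13/18}$ after the choice $U=\min(x^{1/3},Q^{-1/4}x^{5/8})$.

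So the concrete gap is that a single Cauchy--Schwarz cannot generate these exponents: the $5/8$ comes from the $1/(2k)$ with $k=2$, and the $13/18$ from $k=3$. Your off-diagonal analysis via $(d,h,q)^{1/2}$ averaging is not what is doing the work here; the saving over (\ref{foushparaverage}) comes from the higher-moment congruence count, which your plan does not contain.
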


This gives us a nontrivial result for $x\geq Q^{\frac23+\delta}$.  The
proof uses similar methods to those of Fouvry and Shparlinski.
However we introduce higher moments into
one of their estimates.  This results in a problem of counting solutions to a congruence with a larger number of variables; one which we  can solve with a sharp bound when we average over $q$.

Using this theorem we give a further improvement of the exponent in the ternary form problem.  Let $P^+(n)$ denote the largest prime factor of $n$.

\begin{thm}\label{ternary}
Let $\theta_1=1.188\ldots$ be the unique root of the equation 
$$42\theta-65+38\log\left(\frac{21\theta-19}{4}\right)=0.$$
Then, for any $\theta<\theta_1$,
$$\#\{p_1,p_2,p_3:p_i\sim x,p_i\text{ prime},P^+(p_1p_2+p_1p_3+p_2p_3)>x^\theta\}\gg_\theta \frac{x^3}{(\log x)^3}.$$
\end{thm}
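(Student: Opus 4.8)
The plan is to convert the largest–prime–factor condition into a sifting problem for the sequence of values of the ternary form, to extract a level of distribution for this sequence from Theorem \ref{thm1}, and then to run a weighted linear sieve whose optimisation produces the equation defining $\theta_1$. Set $n=p_1p_2+p_1p_3+p_2p_3$ and work with the multiset $\mathcal A=(a_n)$, where $a_n$ counts the triples $(p_1,p_2,p_3)$ of primes $p_i\sim x$ representing $n$; thus $\sum_n a_n=(\#\{p\sim x\})^3\sim (x/\log x)^3$ and every $n\asymp x^2$. The key observation is that for a prime $\ell>2x$ one has $(p_i,\ell)=1$ automatically, so $\ell\mid n$ if and only if $\overline{p_1}+\overline{p_2}+\overline{p_3}\equiv 0\pmod\ell$, since $n/(p_1p_2p_3)\equiv\overline{p_1}+\overline{p_2}+\overline{p_3}\pmod\ell$. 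Detecting this congruence with additive characters gives, for the level quantity $A_\ell=\sum_{n\equiv 0\,(\ell)}a_n$,
$$A_\ell=\frac{(\#\{p\sim x\})^3}{\ell}+\frac1\ell\sum_{a=1}^{\ell-1}S_\ell(a;x)^3,$$
so that the main term has density $1/\ell$ (sieve dimension one) and the remainder $r_\ell$ is built from the very Kloosterman sums over primes of the introduction.

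I would then control these remainders on average. The elementary identity $\sum_{b\bmod\ell}|S_\ell(b;x)|^2=\ell\,\#\{p\sim x\}$, valid because $\ell>x$ forces $p\equiv p'\pmod\ell$ to collapse to $p=p'$, together with $|S_\ell(a;x)|^3\le(\max_b|S_\ell(b;x)|)\,|S_\ell(a;x)|^2$, yields $|r_\ell|\le \#\{p\sim x\}\cdot\max_{(a,\ell)=1}|S_\ell(a;x)|$; for prime $\ell$ the maximum over $1\le b\le\ell-1$ is exactly the maximum over $(a,\ell)=1$ that Theorem \ref{thm1} bounds. Summing over a dyadic block $\ell\sim Q\le x^{3/2}$ and inserting Theorem \ref{thm1} gives
$$\sum_{\ell\sim Q}|r_\ell|\ll_\epsilon \#\{p\sim x\}\,\bigl(Q^{5/4}x^{5/8}+Qx^{9/10}+Q^{7/6}x^{13/18}\bigr)Q^\epsilon.$$
Summing over dyadic $Q\le D$ (the largest block dominating) and comparing with the total mass $(\#\{p\sim x\})^3$, the binding constraint comes from the third term $Q^{7/6}x^{13/18}$ and shows $\sum_{\ell\le D}|r_\ell|=o\bigl((x/\log x)^3\bigr)$ as soon as $D\le x^{23/21-\delta}$. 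This level of distribution exceeds the value $x^{13/12}$ obtainable from Baker's bound (\ref{bakeraverage}), which is the source of the improvement. The composite squarefree moduli $d\le D$ required by the sieve are treated by the same reciprocal identity and character expansion (again $(p_i,d)=1$ is automatic), the only new point being that one must group the additive characters according to their conductor $m\mid d$ and apply Theorem \ref{thm1} at each level $m$; the decisive moduli are those with $d>x$, where the second–moment identity remains clean.

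Finally I would feed this level of distribution into the weighted linear sieve of Fouvry--Shparlinski as refined by Baker. Since $n\asymp x^2$ possesses at most one prime factor exceeding $x^\theta$ once $\theta>1$ (two would force $n\ge x^{2\theta}\gg x^2$), the weighted count of triples with $P^+(n)>x^\theta$ admits a clean lower bound, and optimising the sieve weights over the admissible range dictated by $D=x^{23/21}$ leads precisely to the condition $42\theta-65+38\log\!\bigl(\tfrac{21\theta-19}{4}\bigr)=0$. As a consistency check one notes that the argument of the logarithm equals $1$ exactly when $\theta=23/21$, at which point the equation reads $42\cdot\tfrac{23}{21}-65=-19<0$; the weights thereby buy the extra $\theta_1-\tfrac{23}{21}\approx0.093$ beyond the level of distribution, and for every $\theta<\theta_1$ the weighted main term survives, giving the lower bound $\gg x^3/(\log x)^3$.

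The main obstacle is twofold. First, Theorem \ref{thm1} is available only for $Q\le x^{3/2}$, so the sieve must be arranged so that no modulus beyond $x^{23/21}$ is ever needed and so that the composite moduli are dissected down to their conductors without loss. Second, and more seriously, the genuine difficulty is the sieve optimisation itself: balancing the weighted main term against the remainder at level $D$ in dimension one is exactly what selects the transcendental equation, and hence the numerical threshold $\theta_1=1.188\ldots$, so the quantitative gain over Baker's $1.1673$ hinges entirely on propagating the sharper level $x^{23/21}$ through that optimisation.
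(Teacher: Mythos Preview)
Your proposal is correct and follows the same route as the paper. The paper's own proof is extremely terse: it simply observes that Theorem~\ref{thm1} yields the estimate $\sum_{q\sim Q}\max_{(a,q)=1}|S_q(a;x)|\ll x^{2-\delta}$ throughout $x\le Q\le x^{23/21-\epsilon}$ (the binding term being $Q^{7/6}x^{13/18}$, exactly as you identify), and then quotes Baker's analysis as a black box which, given such an exponent $\alpha$, produces the threshold equation $2\theta-\alpha-2+2(2-\alpha)\log\bigl((\theta+\alpha-2)/(2\alpha-2)\bigr)=0$; substituting $\alpha=23/21$ and clearing denominators gives the stated equation. Your write-up unpacks more of what lies inside that black box (the reciprocal identity, the character expansion, the second-moment trick, the weighted sieve), but the logical skeleton and the decisive numerical input $\alpha=23/21$ are identical.
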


In some applications of Theorem \ref{thm1} the maximum over $a$ is not necessary.  We
therefore prove a bound when $a$ is constant, which is stronger provided that $a$ is not too large.

\begin{thm}\label{thm2}
For any integer $a>0$ and any $\epsilon>0$ we have 
$$\sum_{q\sim Q}|S_q(a;x)|\ll_\epsilon (1+\frac{a}{xQ})^{\frac12}(Q^{\frac12}x^{\frac{11}{8}}+Q^{\frac76}x^{\frac23})(aQ)^\epsilon$$ 
for $Q^{\frac43}\geq x\geq Q^{\frac12}$.
\end{thm}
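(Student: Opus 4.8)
The plan is to follow the Type~I/Type~II strategy of Fouvry and Shparlinski but to retain the specific value of $a$ throughout rather than passing to the maximum over $a$. First I would apply a combinatorial identity of Vaughan or Heath--Brown type to $S_q(a;x)$, expressing it as a sum of $O(\log^{A}x)$ bilinear forms of the shape $\sum_{m\sim M}\sum_{n\sim N}\alpha_m\beta_n\, e(a\overline{mn}/q)$ with $MN\asymp x$, where in the Type~I case the $\beta_n$ may be taken to be $1$ on a long interval and in the Type~II case both $\alpha_m$ and $\beta_n$ are arbitrary bounded sequences. Summing over $q\sim Q$ reduces Theorem~\ref{thm2} to estimating $\sum_{q\sim Q}\bigl|\sum_{m}\sum_n \alpha_m\beta_n\, e(a\overline{mn}/q)\bigr|$ for each dyadic $M$ (with $N\asymp x/M$), uniformly in $a$, with the value of $a$ tracked at every stage.

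For the Type~I sums I would fix $m,q$, set $b\equiv a\overline m\pmod q$, and complete the inner sum $\sum_{n\sim N}e(b\overline n/q)$ in $n$, expressing it through Kloosterman sums $K(b,h;q)$ weighted by the geometric factors $\min(N,\|h/q\|^{-1})$. The point of keeping $a$ fixed is that the frequencies $h$ carrying the mass of the completed sum occupy a range of length proportional to $a$ rather than to $q$; after Cauchy--Schwarz and summation over $q$ this is what should produce the gain factor $(1+a/(xQ))^{1/2}$, while the residual averages $\sum_{q\sim Q}K(a\overline m,h;q)$ are controlled by the congruence count described next. The heart of the matter, however, is the Type~II case, where I would depart from a plain Cauchy--Schwarz second moment and instead raise the $n$-factor to a high even power $2k$. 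Separating $\alpha_m$ by H\"older's inequality then reduces the problem to bounding, for all $h$ and summed over $q\sim Q$ and $m\sim M$, the number of solutions of a congruence
$$\overline{n_1}+\cdots+\overline{n_k}\equiv \overline{n_{k+1}}+\cdots+\overline{n_{2k}}\pmod q,\qquad n_i\sim N.$$
This is the congruence in more variables alluded to in the introduction: its diagonal solutions supply the main term, while the off-diagonal part is evaluated by completing the sum over $q$ (equivalently over the remaining smooth variable), and it is here that the dependence on $a$ re-enters and contributes to the factor $(1+a/(xQ))^{1/2}$.

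Finally I would balance $M$ against $N=x/M$ and optimise the moment $k$, so that the two terms $Q^{1/2}x^{11/8}$ and $Q^{7/6}x^{2/3}$ emerge, the former governing the range near $x=Q^{4/3}$ and the latter the range near $x=Q^{1/2}$; the matching second term $Q^{7/6}x^{2/3}$ is precisely the one inherited from the proof of Theorem~\ref{thm1}. I expect the main obstacle to be proving an essentially best-possible bound for the $2k$-variable inverse congruence count, uniformly in $a$, $h$ and $q\sim Q$: any loss there degrades the exponent $11/8$, and one must disentangle the diagonal main term from the $a$-dependent completion error carefully enough to recover the precise gain factor $(1+a/(xQ))^{1/2}$ and the permissible range $Q^{4/3}\geq x\geq Q^{1/2}$.
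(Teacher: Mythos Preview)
Your proposal misses the central idea behind Theorem~\ref{thm2}. The higher-moment congruence count
\[
\overline{n_1}+\cdots+\overline{n_k}\equiv \overline{n_{k+1}}+\cdots+\overline{n_{2k}}\pmod q
\]
is the engine of Theorem~\ref{thm1}, not of Theorem~\ref{thm2}. Once you apply H\"older in $n$ and extend the $m$-sum to a full system of residues (or sum over $q$), the parameter $a$ disappears entirely; the resulting bound is exactly Lemma~\ref{typeiimax}, which holds with $\max_{(a,q)=1}$ and therefore cannot yield any saving from fixing $a$, nor the factor $(1+a/(xQ))^{1/2}$. Your claim that ``the dependence on $a$ re-enters'' in the off-diagonal congruence count is simply false: that count involves only the $n_i$ and $q$. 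Likewise, in the Type~I completion the relevant frequency range is $|h|\lesssim q/N$, which has nothing to do with $a$.

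What the paper actually does for Type~II is quite different. Because $a$ is fixed, one can put the $q$-sum \emph{inside}, apply Cauchy--Schwarz (just $k=1$, no higher moments), expand, and then invert the Kloosterman fraction:
\[
\frac{\overline{lm_1m_2}}{q}=\frac{1}{qlm_1m_2}-\frac{\overline q}{lm_1m_2}.
\]
The smooth piece $e\!\bigl(a(m_1-m_2)/(qlm_1m_2)\bigr)$ is removed by partial summation, and it is precisely this step that produces $(1+a/(LMQ))^{1/2}$. What remains is a short Kloosterman sum in $q$ to modulus $lm_1m_2$, bounded by Weil (Lemma~\ref{weil}); this is where the exponent $11/8$ comes from. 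The term $Q^{7/6}x^{2/3}$ arises not from any Type~II congruence count but from the trivial Type~I bound $Q^{3/2}U$ after choosing $U=\min(x^{1/3},Q^{-1/3}x^{2/3})$. In short: the key lemma you need is Lemma~\ref{typeiinomax}, based on reciprocity of Kloosterman fractions and an inner $q$-sum, and your higher-moment route cannot reach the stated bound.
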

This is nontrivial for $Q^{\frac12+\delta}\leq 
x\leq Q^{\frac43-\delta}$.  The proof exploits the fact that, since there is no maximum over $a$, we can reorder summations to give an inner sum over $q\sim Q$.  This is a longer range than those arising in the proof of Theorem \ref{thm1}.  After inverting the Kloosterman fractions in such sums we reach a situation in which the Weil estimate for short Kloosterman sums can be used.

The sums in this last theorem are essentially bilinear forms with Kloosterman fractions, which were studied for arbitrary coefficients by Duke, Friedlander and Iwaniec in \cite{dfi}.  In the case that one of the coefficients is the indicator function of the primes then our theorem does better than the general result of \cite{dfi}, provided that  $x$ and $Q$ are sufficiently close in size.

We are most interested in the situation when $x\asymp Q$, that is
$Q\ll x\ll Q$, as this is the case in our application to Diophantine
approximation.  For this reason we have given theorems which, given
our current ideas, are as sharp as possible in this case.  For $x$
sufficiently different in size to $Q$ it is possible to improve the
above theorems.  In order to compare the various
results we note
that when $x\asymp Q$ we have the following bounds, valid for any
$\epsilon>0$. 

\begin{enumerate}
\item Using Fouvry and Shparlinski's bound (\ref{foushparaverage}) or Baker's  (\ref{bakeraverage}), we get  
$$\sum_{q\sim Q}\max_{(a,q)=1}|S_q(a;x)|\ll_\epsilon Q^{\frac{23}{12}+\epsilon}.$$

\item Theorem \ref{thm1} improves this to 
$$\sum_{q\sim Q}\max_{(a,q)=1}|S_q(a;x)|\ll_\epsilon Q^{\frac{19}{10}+\epsilon}.$$

\item If $0<a\ll Q^2$ then using Theorem 2 from Duke, Friedlander and Iwaniec, \cite{dfi}, we get a bound 
$$\sum_{q\sim Q}|S_q(a;x)|\ll_\epsilon Q^{\frac{95}{48}+\epsilon}.$$

\item If $0<a\ll Q^2$ then Theorem \ref{thm2} gives a bound 
$$\sum_{q\sim Q}|S_q(a;x)|\ll_\epsilon Q^{\frac{15}{8}+\epsilon}.$$
\end{enumerate}

These bounds should be compared with  the trivial bound of  $Q^2$ and the conjectured best bound of $Q^{\frac32+\epsilon}$.

\subsection*{Acknowledgements}

This work was completed as part of my DPhil, for which I was funded by EPSRC grant EP/P505666/1. I am very grateful to the
EPSRC for funding me and to my supervisor, Roger Heath-Brown, for all
his help and advice. 

\section{Lemmas}

We require the following estimate for short Kloosterman sums coming from the Weil bound.

\begin{lem}\label{weil}
For integers $a$ and $q$ with $q>1$, and reals $Y<Z$ we have, for any $\epsilon>0$, that  
$$\twosum{Y<n\leq Z}{(n,q)=1}e(\frac{a\overline n}{q})\ll_\epsilon ((a,q)(\frac{Z-Y}{q}+1)+q^{\frac12})q^\epsilon.$$
\end{lem}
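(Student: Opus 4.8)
The plan is to estimate the incomplete sum by completing it to full residue sums modulo $q$ and then invoking the Weil bound. Write $f(n)=e(a\overline n/q)$ when $(n,q)=1$ and $f(n)=0$ otherwise; this is periodic modulo $q$, so finite Fourier inversion gives $f(n)=\sum_{h\bmod q}\hat f(h)e(hn/q)$ with $\hat f(h)=q^{-1}\sum_{m\bmod q,(m,q)=1}e((a\overline m-hm)/q)$. The inner sum is a Kloosterman sum, so $\hat f(h)=q^{-1}S(-h,a;q)$ in the usual notation $S(u,v;q)=\sum_{m\bmod q,(m,q)=1}e((um+v\overline m)/q)$. Summing over the interval and interchanging the order of summation yields
$$\twosum{Y<n\le Z}{(n,q)=1}e\Bigl(\frac{a\overline n}{q}\Bigr)=\frac1q\sum_{h\bmod q}S(-h,a;q)\sum_{Y<n\le Z}e\Bigl(\frac{hn}{q}\Bigr),$$
an identity valid irrespective of the length $Z-Y$.

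The plan is then to separate the frequency $h\equiv 0$ from the rest. For $h\equiv 0$ the Kloosterman sum degenerates to the Ramanujan sum $S(0,a;q)=c_q(a)$, which satisfies $|c_q(a)|\le(a,q)$, while the geometric sum is just the number of integers in $(Y,Z]$, at most $Z-Y+1$. This term therefore contributes at most $q^{-1}(a,q)(Z-Y+1)\ll(a,q)(\frac{Z-Y}{q}+1)$, which is the first part of the claimed bound. For $h\not\equiv0$ I would apply the Weil bound for general modulus, $|S(-h,a;q)|\le\tau(q)(h,a,q)^{1/2}q^{1/2}$, together with the standard estimate $|\sum_{Y<n\le Z}e(hn/q)|\ll\|h/q\|^{-1}$ for the geometric sum, where $\|\cdot\|$ denotes distance to the nearest integer.

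Taking representatives $1\le|h|\le q/2$, so that $\|h/q\|^{-1}=q/|h|$, the off-diagonal contribution is bounded by
$$\frac{\tau(q)q^{1/2}}{q}\sum_{1\le|h|\le q/2}(h,a,q)^{1/2}\frac{q}{|h|}=\tau(q)q^{1/2}\sum_{1\le|h|\le q/2}\frac{(h,a,q)^{1/2}}{|h|}.$$
To handle the remaining sum I would group the $h$ according to the value $g=(h,a,q)$, which must divide $(a,q)$; writing $h=gk$ gives $\sum_g g^{1/2}\cdot g^{-1}\sum_{|k|\le q/(2g)}|k|^{-1}\ll(\log q)\sum_{g\mid(a,q)}g^{-1/2}\ll q^\epsilon$, since the divisor sum is at most $\tau((a,q))\ll q^\epsilon$. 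Hence the off-diagonal terms contribute $\ll\tau(q)q^{1/2}\cdot q^\epsilon\ll q^{1/2+\epsilon}$, and combining the two ranges of $h$ and absorbing $\tau(q)$ and the logarithm into $q^\epsilon$ gives the stated bound.

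The routine parts are the Fourier completion and the geometric series estimate; the one point demanding care is the Weil bound in the form $|S(-h,a;q)|\le\tau(q)(h,a,q)^{1/2}q^{1/2}$ for composite $q$ and for $a$ not necessarily coprime to $q$, together with the bookkeeping of the factor $(h,a,q)^{1/2}$ against the harmonic sum $\sum|h|^{-1}$. I expect this gcd-weighted harmonic sum to be the main, though still elementary, obstacle: one must check that $\sum_{1\le|h|\le q/2}(h,a,q)^{1/2}/|h|$ really does stay bounded by $q^\epsilon$ uniformly in $a$, which is exactly what the grouping by $g\mid(a,q)$ achieves.
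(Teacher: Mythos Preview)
Your argument is correct and is the standard completion-plus-Weil proof of this incomplete Kloosterman bound. The paper does not prove the lemma from scratch: it simply quotes Lemma~1 of Fouvry--Shparlinski and observes that inserting $\phi(n)\gg n^{1-\epsilon}$ and $\tau(n)\ll n^\epsilon$ yields the stated form. So the two are not really different approaches---the Fouvry--Shparlinski lemma is itself proved exactly as you describe---but you have supplied the details that the paper delegates to a citation. Your handling of the $h=0$ term via the Ramanujan sum bound $|c_q(a)|\le(a,q)$, the Weil--Estermann bound $|S(-h,a;q)|\le\tau(q)(h,a,q)^{1/2}q^{1/2}$ for composite $q$, and the gcd-weighted harmonic sum grouped by $g\mid(a,q)$ are all sound.
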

\begin{proof}
This is a slight weakening of Lemma 1 from Fouvry and Shparlinski, \cite{foushpar}.  It follows immediately on inserting the estimate 
$$n^{1-\epsilon}\ll \phi(n)\ll n$$
as well as the standard bound for the divisor function, $\tau$, into that lemma.
\end{proof}

We also require the following estimate for the number of solutions to a certain Diophantine equation.

\begin{lem}\label{cayley}
Let $k\in \N$ and $\epsilon>0$ be fixed.  For any $N\geq 0$ we have 
$$\#\{(n_1,\ldots,n_{2k})\in \Z^{2k}:0<n_i\leq N,\frac{1}{n_1}+\ldots+\frac{1}{n_k}=\frac{1}{n_{k+1}}+\ldots+\frac{1}{n_{2k}}\}\ll_{k,\epsilon} N^{k+\epsilon}.$$
\end{lem}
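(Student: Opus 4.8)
The engine of the proof will be the elementary bound, via the hyperbola method, that a fixed nonzero rational is rarely a difference of two unit fractions. Precisely, for fixed $\delta=a/b$ in lowest terms with $\delta\neq 0$, writing $1/x-1/y=\delta$ as $(ax-b)(ay+b)=-b^2$ shows that the number of pairs $(x,y)$ with $0<x,y\le N$ and $1/x-1/y=\delta$ is at most the number of divisors of $b^2$; since any such solution forces $b\mid xy\le N^2$, this is $\ll_\epsilon N^\epsilon$. By contrast the count is exactly $N$ when $\delta=0$. This dichotomy, a gain of a whole power of $N$ as soon as the target is nonzero, is what I would leverage repeatedly.

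The plan is to prove a shifted strengthening by induction on $k$. For a rational $\delta$ set
$$N_k(\delta)=\#\Big\{(n_1,\ldots,n_k,m_1,\ldots,m_k)\in\Z^{2k}:0<n_i,m_j\le N,\ \sum_{i=1}^k\tfrac1{n_i}-\sum_{j=1}^k\tfrac1{m_j}=\delta\Big\},$$
so that the lemma is the assertion $N_k(0)\ll_{k,\epsilon}N^{k+\epsilon}$. I would establish simultaneously that $N_k(0)\ll_{k,\epsilon}N^{k+\epsilon}$ and that $N_k(\delta)\ll_{k,\epsilon}N^{k-1+\epsilon}$ for every fixed $\delta\neq 0$. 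The case $k=1$ is exactly the engine above. For the inductive step on the diagonal ($\delta=0$), peel off the pair $n_k,m_k$: writing $\delta'=1/m_k-1/n_k$ one has $N_k(0)=\sum_{n_k,m_k}N_{k-1}(\delta')$. The terms with $n_k=m_k$ contribute $N\cdot N_{k-1}(0)\ll N^{k+\epsilon}$, while the $\ll N^2$ terms with $n_k\neq m_k$ each have $\delta'\neq0$ and so contribute $\ll N^{k-2+\epsilon}$ by the shifted hypothesis at level $k-1$, for a total of $\ll N^{k+\epsilon}$. Thus the diagonal bound at level $k$ follows cleanly from the shifted bound one level down.

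The hard part will be the shifted bound itself, $N_k(\delta)\ll N^{k-1+\epsilon}$ for $\delta\neq0$. The same peeling does \emph{not} close it: the fully off-diagonal contribution, in which neither isolated pair nor any single coincidence trivialises the equation, simply reproduces $N_k(\delta)$, so one cannot escape by induction alone. This is the genuine obstacle, and it is already the whole content when $k=2$, where $N_2(\delta)=\sum_v r(v)r(v-\delta)$ with $r(v)$ the number of representations of $v$ as a sum of two unit fractions with denominators $\le N$; the diagonal terms $r(v)\ll_\epsilon N^\epsilon$ are harmless, but controlling the range of $v$ for which both $v$ and $v-\delta$ are so representable is exactly the difficulty. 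To break the circularity I would argue not additively but through the \emph{multiplicative} structure of the denominators: for a solution, consider the least common denominator $D$ of all $2k$ fractions and a prime power $p^\alpha\parallel D$ that is maximal. Clearing denominators and reducing modulo $p$ shows that the terms whose denominator is divisible by $p^\alpha$ cannot balance unless at least two of them occur, which pins several denominators into a common prime-power class and thereby forces a coincidence that feeds back into the divisor engine or the induction. Making this $p$-adic cancellation quantitative, uniformly over the prime powers dividing $D$, is where the real work lies; once it yields the shifted bound for $\delta\neq0$, the diagonal estimate of the previous paragraph completes the proof.
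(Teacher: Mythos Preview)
Your proposal has a genuine gap: the heart of your argument---the shifted bound $N_k(\delta)\ll_{k,\epsilon} N^{k-1+\epsilon}$ for $\delta\neq 0$---is never actually proven. You flag this yourself as ``where the real work lies'' and offer only a heuristic about $p$-adic cancellation; the peeling-and-induction scheme does not close without it, so the proof as written is incomplete. (The shifted bound is in fact true, but establishing it uniformly in $\delta$ is at least as hard as the lemma itself, so you have traded the original problem for a harder one.)

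What is striking is that your $p$-adic idea, aimed directly at the case $\delta=0$ rather than at the harder shifted problem, \emph{is} the paper's proof and bypasses the inductive scaffolding entirely. Set $P=\prod_{i=1}^{2k} n_i$ and clear denominators. If a prime $p$ divides some $n_i$, then every term $P/n_j$ with $j\neq i$ carries the factor $n_i$ and hence is divisible by $p$; since the total sum vanishes (this is exactly where $\delta=0$ is used), the remaining term $P/n_i=\prod_{j\neq i}n_j$ must also be divisible by $p$, forcing $p\mid n_j$ for some $j\neq i$. Thus every prime dividing $P$ does so to at least the second power, i.e.\ $P$ is square-full. Since $P\le N^{2k}$ and the square-full integers up to $x$ number $O(\sqrt{x})$, there are $O(N^k)$ possibilities for $P$, and each determines the tuple up to $\tau_{2k}(P)\ll_{k,\epsilon} N^\epsilon$ choices. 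The lemma follows directly, with no induction and no shifted estimate. Your instinct to look at the multiplicative structure of the denominators was correct; you simply pointed it at a harder target than necessary.
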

\begin{proof}
Suppose that 
$$\frac{1}{n_1}+\ldots+\frac{1}{n_k}=\frac{1}{n_{k+1}}+\ldots+\frac{1}{n_{2k}}$$
with $0<n_i\leq N$ for all $i$.  Let $P=\prod n_i$.  We clearly have 
$0<P<N^{2k}$.  

Suppose that a prime $p$ divides $n_i$ for some $i$.  When we multiply
the above equation by $P$, to remove the denominators, all the terms
except one will contain a factor $n_i$ and so will be divisible by $p$.  In order to have equality the exceptional term must also be divisible by $p$. Hence, since $p$ is prime, there must be an index $j\ne i$ with $p|n_j$.  It follows that if $p|P$ for a prime $p$ then 
$p^2|P$, whence $P$ is square-full.  

The number of square-full integers up to $x$ is $O(\sqrt{x})$, see Golomb \cite{golomb} for example.  Thus there are $O(N^k)$ possible values for $P$.  For each such value of $P$ the number of solutions to the equation is bounded by the divisor function $\tau_{2k}(P)$.  The result follows on using the standard estimate for these divisor functions.
\end{proof}

Now let $J^{(k)}_M(q)$ denote the number of solutions to the congruence 
\begin{equation}\label{cong}
\overline m_1+\ldots+\overline m_k\equiv \overline m_{k+1}+\ldots+\overline m_{2k}\pmod q
\end{equation}
with $1\leq m_i\leq M$ and $(m_i,q)=1$. The following generalises
Fouvry and Shparlinski's result, \cite[Lemma 3]{foushpar}. 

\begin{lem}\label{cayleycong}
Fix some $k\in\N$.  For any $\epsilon>0$ and any $M\geq 1$ we have 
$$\sum_{q\sim Q}J^{(k)}_M(q)\ll_{k,\epsilon} (QM^k+M^{2k})M^\epsilon.$$ 
\end{lem}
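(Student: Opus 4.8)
The plan is to clear denominators so that the congruence (\ref{cong}) becomes an ordinary divisibility condition, and then to interchange the order of summation. Fix a tuple $(m_1,\ldots,m_{2k})$ with $1\le m_i\le M$ and put $B=\prod_{i=1}^{2k}m_i$. If $(m_i,q)=1$ for every $i$ then $(B,q)=1$, so I may multiply (\ref{cong}) through by $B$; using $B\overline{m_i}\equiv\prod_{j\ne i}m_j\pmod q$ (the product being over all $j\ne i$ in $\{1,\ldots,2k\}$) this shows that, for such $q$, (\ref{cong}) is equivalent to $q\mid A$, where
$$A=\sum_{i=1}^{k}\prod_{j\ne i}m_j-\sum_{i=k+1}^{2k}\prod_{j\ne i}m_j$$
is an integer depending only on the tuple. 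The crucial feature is that $|A|\ll_k M^{2k-1}$, since $A$ is a sum of $2k$ products, each of at most $2k-1$ factors bounded by $M$.

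With this reformulation I would write
$$\sum_{q\sim Q}J^{(k)}_M(q)=\sum_{\substack{1\le m_i\le M\\ 1\le i\le 2k}}\#\{q\sim Q:q\mid A,\ (m_i,q)=1\ \text{for all }i\}$$
and split the tuples according to whether $A=0$ or $A\ne 0$; these two cases are controlled by entirely different mechanisms. For the diagonal $A=0$, note that $B\ne 0$, so $A=0$ is equivalent to $\frac1{m_1}+\cdots+\frac1{m_k}=\frac1{m_{k+1}}+\cdots+\frac1{m_{2k}}$, which is exactly the equation counted in Lemma \ref{cayley} with $N=M$. Hence there are $\ll_{k,\epsilon}M^{k+\epsilon}$ such tuples, and each contributes at most the number of $q$ in $[Q,2Q)$, namely $O(Q)$. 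This part therefore gives $\ll_{k,\epsilon}QM^{k+\epsilon}$.

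For the off-diagonal $A\ne 0$, the condition $q\mid A$ forces $q$ to be a divisor of a fixed nonzero integer of size $\ll_k M^{2k-1}$, so the number of admissible $q$ is at most $\tau(|A|)\ll_{k,\epsilon}M^\epsilon$. Since there are at most $M^{2k}$ tuples in all, this part contributes $\ll_{k,\epsilon}M^{2k+\epsilon}$, and adding the two contributions yields the stated bound. The step where the structure is genuinely used — and the one I would be most careful with — is this off-diagonal estimate: it succeeds only because clearing denominators produces an integer $A$ that is polynomially bounded in $M$, so that the divisor bound $\tau(|A|)\ll_\epsilon M^\epsilon$ limits the number of relevant moduli $q$. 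Everything else is a routine interchange of summation and an appeal to Lemma \ref{cayley}.
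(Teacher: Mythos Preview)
Your proof is correct and follows essentially the same approach as the paper: interchange the order of summation, clear denominators to reduce the congruence to $q\mid A$ (the paper calls this integer $F$), and split into the diagonal case $A=0$ handled by Lemma~\ref{cayley} and the off-diagonal case handled by the divisor bound. The only cosmetic difference is that you record the sharper estimate $|A|\ll_k M^{2k-1}$ whereas the paper is content with $|F|\ll M^{2k}$, but this has no effect on the outcome.
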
 
\begin{proof}
For each $(m_1,\ldots,m_{2k})$ with $1\leq m_i\leq M$ we count the number of $q\sim Q$ with $(q,m_i)=1$ for which the congruence (\ref{cong}) holds.  If 
$$\frac{1}{m_1}+\ldots+\frac{1}{m_k}=\frac{1}{m_{k+1}}+\ldots+\frac{1}{m_{2k}}$$
then the congruence is satisfied for every $q\sim Q$ for which
$q$ is coprime to $\prod m_i$.  Using Lemma \ref{cayley} it follows that the contribution 
from such $2k$-tuples $(m_1,\ldots,m_{2k})$ is
$$\ll_{\epsilon,k}QM^{k+\epsilon}.$$ 
In the alternative case we define 
$$F=\prod_{i=1}^{2k}m_i(\sum_{i=1}^k m_i^{-1}-\sum_{i=k+1}^{2k}m_i^{-1})$$
so that $F$ is a non-zero integer with $|F|\ll M^{2k}$.  Since $q|F$ there are $O(M^\epsilon)$ possible values for $q$.  Thus the contribution from such $2k$-tuples $(m_1,\ldots,m_{2k})$ is 
$$\ll_{\epsilon,k}M^{2k+\epsilon}$$
so the result follows.
\end{proof}

\section{Estimates for Bilinear Sums}

Throughout this section let $\alpha_l,\beta_m$ be arbitrary complex
numbers bounded by $1$.  We will prove  estimates for sums
$$W=W_{a,q}=\twosum{l\sim L,m\sim M}{(ml,q)=1}\alpha_l\beta_me(\frac{a\overline{lm}}{q}),$$ 
either individually or on average over $q\sim Q$.  If $\beta_m=1$ then
we call $W$ a Type I sum, if not then it is Type II. 

Firstly we use Lemma \ref{cayleycong} to estimate Type II sums on average.  This is a generalisation of a bound of Fouvry and Shparlinski, \cite[Corollary 5]{foushpar}.

\begin{lem}\label{typeiimax}
Suppose that $1\leq L,M\leq Q$.  For any integer $k\geq 1$ and any $\epsilon>0$ we have 
$$\sum_{q\sim Q}\max_{(a,q)=1}|W_{a,q}|\ll_{\epsilon,k} Q(Q^{\frac{1}{2k}}L^{\frac{2k-1}{2k}}M^{\frac{1}{2}}+L^{\frac{2k-1}{2k}}M)Q^\epsilon.$$
\end{lem}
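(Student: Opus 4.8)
The plan is to introduce a $2k$-th power moment in the variable $l$ by H\"older's inequality, and then to exploit orthogonality of the additive characters modulo $q$ to reduce the resulting sum to the congruence count $J^{(k)}_M(q)$ bounded in Lemma \ref{cayleycong}. For each $q\sim Q$ fix an $a=a(q)$ with $(a,q)=1$ attaining the maximum, and write $W_{a,q}=\twosum{l\sim L}{(l,q)=1}\alpha_l V_l$, where $V_l=\twosum{m\sim M}{(m,q)=1}\beta_m e(\frac{a\overline{lm}}{q})$. Since $|\alpha_l|\le 1$, H\"older's inequality with exponents $2k$ and $2k/(2k-1)$ gives $|W_{a,q}|^{2k}\ll_k L^{2k-1}\sum_{l\sim L,(l,q)=1}|V_l|^{2k}$, so it remains to bound this inner moment on average over $q$.

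The crucial observation is that $V_l$ depends on $l$ only through the residue $c=a\overline l\bmod q$: setting $G(c)=\twosum{m\sim M}{(m,q)=1}\beta_m e(\frac{c\overline m}{q})$ we have $V_l=G(a\overline l)$. As $l$ runs over those with $(l,q)=1$ in $[L,2L)$ --- a range of length $L\le Q\le q$, so that $l\mapsto a\overline l$ is injective modulo $q$ --- the values $a\overline l$ are distinct residues. Because $|G(c)|^{2k}\ge 0$, I may then extend the sum to a complete residue system, obtaining $\sum_{l\sim L,(l,q)=1}|V_l|^{2k}\le\sum_{c\bmod q}|G(c)|^{2k}$. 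Expanding the $2k$-th power and summing over $c$, orthogonality collapses the $c$-sum to $q$ times the indicator of the congruence (\ref{cong}); bounding $|\beta_m|\le 1$ then yields $\sum_{c\bmod q}|G(c)|^{2k}\le qJ^{(k)}_M(q)$. This completion step is the heart of the argument, and the place I expect to be the only genuine obstacle: a naive treatment would estimate the short Kloosterman sum in $l$ by Lemma \ref{weil} and sum the resulting $q^{1/2}$ over all $m$-tuples, which is far too lossy on the off-diagonal. Passing instead to a full residue system replaces that estimate by an \emph{exact} evaluation via orthogonality, so no off-diagonal cancellation needs to be extracted at all.

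Collecting these steps gives $\sum_{q\sim Q}\max_{(a,q)=1}|W_{a,q}|^{2k}\ll_k L^{2k-1}\sum_{q\sim Q}qJ^{(k)}_M(q)\ll L^{2k-1}Q\sum_{q\sim Q}J^{(k)}_M(q)$, and Lemma \ref{cayleycong} bounds the last sum by $(QM^k+M^{2k})M^\epsilon$. I then apply H\"older once more, now over $q$, in the form $\sum_{q\sim Q}\max_{(a,q)=1}|W_{a,q}|\le Q^{\frac{2k-1}{2k}}(\sum_{q\sim Q}\max_{(a,q)=1}|W_{a,q}|^{2k})^{\frac1{2k}}$, substitute the moment bound, and use $(QM^k+M^{2k})^{\frac1{2k}}\ll Q^{\frac1{2k}}M^{\frac12}+M$ to reach the stated estimate. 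The remaining care is purely bookkeeping: checking the injectivity of $l\mapsto a\overline l$ (which is where $L\le Q$ enters) and tracking the exponents of $Q$ through the two applications of H\"older, where the factor $Q^{\frac{2k-1}{2k}}$ combines with the $Q^{\frac1{2k}}$ from $q\le 2Q$ and the contribution of Lemma \ref{cayleycong} to produce the leading $Q$ together with the two displayed terms.
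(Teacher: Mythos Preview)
Your proof is correct and follows essentially the same route as the paper: H\"older in $l$, completion of the $l$-sum to a full residue system modulo $q$ (using $L\le Q\le q$), orthogonality to reduce to $qJ^{(k)}_M(q)$, and then H\"older over $q$ combined with Lemma~\ref{cayleycong}. The only cosmetic differences are that the paper takes the $2k$-th root before summing over $q$ (applying H\"older to $\sum_q J^{(k)}_M(q)^{1/2k}$ rather than to $\sum_q|W|$), and that it changes variables $l\mapsto\overline l$ rather than $l\mapsto a\overline l$; both orderings yield the identical bound.
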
 

\begin{proof}
By H\"older's inequality we have 
$$W^{2k}\leq L^{2k-1}\twosum{l\sim L}{(l,q)=1}\left|\twosum{m\sim M}{(m,q)=1}\beta_m e(\frac{a\overline{lm}}{q})\right|^{2k}.$$
Since $L<Q$ we may bound this by extending the sum over $l$ to a sum over all residues modulo $q$:
$$W^{2k}\leq L^{2k-1}\sum_{l=1}^q\left|\twosum{m\sim M}{(m,q)=1}\beta_m e(\frac{al\overline m}{q})\right|^{2k}.$$
Expanding, reordering the summation and using the orthogonality of additive characters then results in 
$$W^{2k}\ll  L^{2k-1}Q J^{(k)}_M(q).$$
Using H\"older's inequality and Lemma \ref{cayleycong} we then get 
\begin{eqnarray*}
\sum_{q\sim Q}\max_{(a,q)=1}|W|&\ll&L^{\frac{2k-1}{2k}}Q^{\frac{1}{2k}}\sum_{q\sim Q} J^{(k)}_M(q)^{\frac{1}{2k}}\\
&\leq&L^{\frac{2k-1}{2k}}Q\left(\sum_{q\sim
  Q}J^{(k)}_M(q)\right)^{\frac{1}{2k}}\\
&\ll_{\epsilon,k}&L^{\frac{2k-1}{2k}}Q(QM^k+M^{2k})^{\frac{1}{2k}}M^\epsilon\\
&\ll&Q(Q^{\frac{1}{2k}}L^{\frac{2k-1}{2k}}M^{\frac12}+L^{\frac{2k-1}{2k}}M)Q^\epsilon.\\
\end{eqnarray*}
\end{proof}

If we remove the maximum over $a$ then we can obtain a sharper estimate by exploiting the sum over $q$.

\begin{lem}\label{typeiinomax}
For any integer $a>0$, any $L,M,Q\geq 1$, and any  $\epsilon>0$, we have 
$$\sum_{q\sim Q}|W_{a,q}|\ll_\epsilon (1+\frac{a}{LMQ})^{\frac12}(QLM^{\frac12}+Q^{\frac12}L^{\frac54}M^{\frac32})(aQ)^\epsilon.$$
\end{lem}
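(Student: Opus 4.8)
The plan is to exploit the absence of a maximum over $a$ by keeping $q$ as a summation variable, inverting the Kloosterman fractions, and invoking the Weil bound of Lemma \ref{weil}. Write $T=\sum_{q\sim Q}|W_{a,q}|$ and choose $\eta_q$ with $|\eta_q|=1$ so that $|W_{a,q}|=\eta_q W_{a,q}$. Viewing $T$ as a sum over the pairs $(q,l)$ and applying Cauchy--Schwarz, for which there are $O(QL)$ choices each carrying a coefficient $\eta_q\alpha_l$ of modulus at most one, I would obtain
$$T^2\ll QL\sum_{q\sim Q}\,\sum_{\substack{l\sim L\\(l,q)=1}}\left|\sum_{\substack{m\sim M\\(m,q)=1}}\beta_m e\Big(\frac{a\overline{lm}}{q}\Big)\right|^2.$$
The key point is that I square only in $m$, leaving both $l$ and $q$ linear; this asymmetry is what will produce the unequal exponents of $L$ and $M$.

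Opening the square gives an inner expression
$$\sum_{q\sim Q}\sum_{\substack{l\sim L\\(l,q)=1}}\sum_{\substack{m_1,m_2\sim M\\(m_1m_2,q)=1}}\beta_{m_1}\overline{\beta_{m_2}}\,e\Big(\frac{a(\overline{lm_1}-\overline{lm_2})}{q}\Big).$$
The diagonal terms $m_1=m_2$ contribute at most $O(QLM)$. For the off-diagonal terms I would use $\overline{lm_1}-\overline{lm_2}\equiv (m_2-m_1)\overline{lm_1m_2}\pmod q$, so that, setting $h=a(m_2-m_1)$ and $r=lm_1m_2$, the phase becomes $h\overline{r}/q$. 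Crucially this is a sum over the modulus $q$, which I would convert to a sum to a fixed modulus by the reciprocity law $\frac{\overline r}{q}\equiv\frac{1}{qr}-\frac{\overline q}{r}\pmod 1$.

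This replaces the inner sum by $\sum_{q\sim Q,\,(q,r)=1}e(h/(qr))\,e(-h\overline q/r)$, a Kloosterman sum to the modulus $r\sim LM^2$ twisted by the smooth factor $e(h/(qr))$. I would strip off this smooth factor by partial summation, whose cost is the total variation $1+|h|/(Qr)\ll 1+a/(QLM)$, and then bound the remaining Kloosterman sum by Lemma \ref{weil}, getting
$$\Big(1+\frac{a}{QLM}\Big)\Big((h,r)\Big(\frac{Q}{r}+1\Big)+r^{1/2}\Big)r^\epsilon.$$
The appearance of $1+a/(QLM)$ here is exactly what yields the prefactor in the statement.

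Finally I would sum this estimate over $l\sim L$ and $m_1,m_2\sim M$. Since $r\sim LM^2$, the term $r^{1/2}\sim L^{1/2}M$ dominates and contributes $\ll L^{3/2}M^3$, while the greatest-common-divisor terms $(h,r)$ are lower order by standard divisor estimates. Combining with the diagonal, the inner double sum is $\ll (QLM+(1+a/(QLM))L^{3/2}M^3)(aQ)^\epsilon$, and substituting into $T\ll (QL)^{1/2}(\cdots)^{1/2}$ produces the two terms $QLM^{1/2}$ and $(1+a/(QLM))^{1/2}Q^{1/2}L^{5/4}M^{3/2}$ of the claimed bound, the remaining pieces being dominated. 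I expect the main obstacle to be the off-diagonal analysis: correctly inverting the sum over the modulus $q$ by reciprocity so that the Weil bound applies, and extracting the factor $(1+a/(QLM))^{1/2}$ through partial summation over the smooth twist. The decisive idea is the choice to apply Cauchy--Schwarz to the pair $(q,l)$, squaring in $m$ alone, which is responsible for the asymmetric exponents $L^{5/4}M^{3/2}$.
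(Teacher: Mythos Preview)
Your proposal is correct and follows essentially the same route as the paper: Cauchy--Schwarz squaring only in $m$ (the paper does this for each fixed $l$ and then sums over $l$, which is equivalent to your joint Cauchy over the pairs $(q,l)$), reciprocity to invert the Kloosterman fraction, partial summation to strip the smooth twist and produce the factor $(1+a/(LMQ))^{1/2}$, and then Lemma~\ref{weil}. The one technical point the paper handles that you gloss over is a preliminary reduction to coefficients supported on integers coprime to $a$, so that the gcd $(h,r)=(a(m_1-m_2),lm_1m_2)$ collapses to $(m_1-m_2,lm_1m_2)$ and the ``standard divisor estimate'' genuinely applies; without this step the gcd involves $a$ and needs a short extra argument (costing only $a^\epsilon$) before your claim that these terms are lower order is justified.
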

\begin{proof}
We first consider the case when $\alpha_l,\beta_m$ are supported on integers coprime to $a$.  We trivially have 
$$\sum_{q\sim Q}|W_{a,q}|\leq \twosum{l\sim L}{(l,a)=1}W_1(l)$$
where 
$$W_1(l)=\twosum{q\sim Q}{(q,l)=1}\left|\twosum{m\sim M}{(m,aq)=1}\beta_m e(\frac{a\overline{lm}}{q})\right|.$$
By Cauchy's inequality we get 
$$W_1^2\leq Q\twosum{q\sim Q}{(q,l)=1}\left|\twosum{m\sim M}{(m,aq)=1}\beta_m e(\frac{a\overline{lm}}{q})\right|^2.$$
Expanding and reordering the summation then gives us the bound 
$$W_1^2\leq Q\twosum{m_1,m_2\sim M}{(m_1m_2,a)=1}\left|\twosum{q\sim Q}{(q,lm_1m_2)=1}e(\frac{a(m_1-m_2)\overline{lm_1m_2}}{q})\right|.$$
We can write 
$$\overline{lm_1m_2}=\frac{1-q\overline q}{lm_1m_2}$$
where $\overline q$ is an inverse of $q$ modulo $lm_1m_2$.  Therefore
$$W_1^2\leq Q\twosum{m_1,m_2\sim M}{(m_1m_2,a)=1}\left|\twosum{q\sim
  Q}{(q,lm_1m_2)=1}e\left(\frac{a(m_1-m_2)}{qlm_1m_2}\right)e\left(-\frac{a(m_1-m_2)\overline q}{lm_1m_2}\right)\right|.$$ 
If we let 
$$f(t)=e(\frac{a(m_1-m_2)}{lm_1m_2t})$$
then the factor $f(q)$ can be removed using summation by parts.  For $t\sim Q$ we have 
$$f'(t)\ll \frac{a}{LMQ^2}$$
and thus 
$$W_1^2\ll Q(1+\frac{a}{LMQ})\twosum{m_1,m_2\sim M}{(m_1m_2,a)=1}\max_{Q'\sim Q}\left|\twosum{Q\leq q<Q'}{(q,lm_1m_2)=1}e(\frac{a(m_1-m_2)\overline q}{lm_1m_2})\right|.$$
We get a contribution to this from pairs $m_1=m_2$ which is bounded by 
$$Q(1+\frac{a}{LMQ})MQ.$$
For the remaining terms let 
$$b=a(m_1-m_2)$$
and 
$$c=lm_1m_2$$
so that the inner sum is 
$$\twosum{Q\leq q<Q'}{(q,c)=1}e(\frac{b\overline q}{c}).$$
We may bound this using Lemma \ref{weil} by 
$$\ll_\epsilon ((b,c)(\frac{Q}{LM^2}+1)+(LM^2)^{\frac12})(LM^2)^\epsilon.$$
The result would be trivial if $LM^2\geq Q^2$.  We thus assume 
that $LM^2\leq Q^2$, which allows us to replace $(LM^2)^\epsilon$ by $Q^\epsilon$ in our bound. 

The contribution to our estimate for $W_1^2$ from the term $(LM^2)^{\frac12}$ is then 
$$\ll_\epsilon Q(1+\frac{a}{LMQ})L^{\frac12}M^3 Q^\epsilon$$
and that from the remaining terms is 
$$\ll_\epsilon Q(1+\frac{a}{LMQ})(\frac{Q}{LM^2}+1)Q^\epsilon\twosum{m_1,m_2\sim M}{m_1\ne m_2}(m_1-m_2,lm_1m_2),$$
where we have used that $(m_1m_2l,a)=1$.

If we write $h=m_1-m_2\ne 0$ then 
\begin{eqnarray*}
\twosum{m_1,m_2\sim M}{m_1\ne m_2}(m_1-m_2,lm_1m_2)&\ll&\sum_{m_1\sim M}\sum_{0<h\ll M}(h,lm_1(m_1+h))\\
&=&\sum_{m_1\sim M}\sum_{0<h\ll M}(h,lm_1^2)\\
&\ll_\epsilon &M^2Q^\epsilon,\\
\end{eqnarray*}
since one has in general that
$$\sum_{h=1}^H(h,n)\ll H\tau(n)\ll_\epsilon Hn^{\epsilon}$$
for any $n\in\N$.

We conclude that 
$$W_1^2\ll_\epsilon Q(1+\frac{a}{LMQ})(QM+L^{\frac12}M^3+\frac{Q}{L}+M^2)Q^\epsilon.$$
Since $L,M\geq 1$ this simplifies to 
$$W_1^2\ll_\epsilon Q(1+\frac{a}{LMQ})(QM+L^{\frac12}M^3)Q^\epsilon$$
and therefore 
$$W\ll_\epsilon (1+\frac{a}{LMQ})^{\frac12}(QLM^{\frac12}+Q^{\frac12}L^{\frac54}M^{\frac32})Q^\epsilon.$$
This completes the proof under the assumption that the coefficients
are supported on integers coprime to $a$.  To remove this assumption we begin by writing $(l,a)=u$, $a=bu$ and $l=ku$ to get 
\begin{eqnarray*}
W_{a,q}&=&\twosum{l\sim L,m\sim M}{(ml,q)=1}\alpha_l\beta_me(\frac{a\overline{lm}}{q})\\
&=&\twosum{a=ub}{(u,q)=1}\twosum{k\sim L/u,m\sim M}{(mk,q)=1,(k,b)=1}\alpha_{uk}\beta_me(\frac{b\overline{km}}{q}).\\
\end{eqnarray*}
Next we set $(m,b)=v$, $m=vj$ and $b=cv$ to rewrite this as 
$$\twosum{a=uvc}{(uv,q)=1}\twosum{k\sim L/u,j\sim M/v}{(jk,q)=1,(k,vc)=1,(j,c)=1}\alpha_{uk}\beta_{vj}e(\frac{c\overline{kj}}{q}).$$
It follows that 
\begin{eqnarray*}
\sum_{q\sim Q}|W_{a,q}|&\leq &\sum_{a=uvc}\twosum{q\sim Q}{(uv,q)=1}\left|\twosum{k\sim L/u,j\sim M/v}{(jk,q)=1,(k,vc)=1,(j,c)=1}\alpha_{uk}\beta_{vj}e(\frac{c\overline{kj}}{q})\right|\\
&\leq &\sum_{a=uvc}\sum_{q\sim Q}\left|\twosum{k\sim L/u,j\sim M/v}{(jk,q)=1,(k,vc)=1,(j,c)=1}\alpha_{uk}\beta_{vj}e(\frac{c\overline{kj}}{q})\right|.\\
\end{eqnarray*}
For each factorisation $a=uvc$ the inner sum now has coefficients supported on integers coprime to $c$ so the above bound applies.  The number of factorisations is $O(a^\epsilon)$ so the bound for the general sum is the same as that for the sum with coprimality conditions except for an additional factor $a^\epsilon$.
\end{proof}

Finally we use Lemma \ref{weil} directly, to estimate Type I sums when $L$ is small.

\begin{lem}
Suppose that $\beta_m=1$.  Then, for any $L,M\geq 1$ 
and any $\epsilon>0$ we have 
$$W\ll_\epsilon ((a,q)(\frac{LM}{Q}+L)+Q^{\frac12}L)Q^\epsilon.$$
\end{lem}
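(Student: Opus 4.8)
The plan is to peel off the sum over $m$ as an inner short Kloosterman sum, apply Lemma \ref{weil} to it, and then sum trivially over $l$. This is efficient precisely because $L$ is small, as the statement indicates.

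Since $\beta_m=1$, I would first write $W=\sum_{l\sim L,\,(l,q)=1}\alpha_l\,T(l)$, where $T(l)=\sum_{m\sim M,\,(m,q)=1}e(a\overline{lm}/q)$ is the inner sum over $m$. For each fixed $l$ coprime to $q$, the inverse factorises as $\overline{lm}\equiv\overline l\,\overline m\pmod q$, so that $T(l)=\sum_{m\sim M,\,(m,q)=1}e(a'\overline m/q)$ with $a'\equiv a\overline l\pmod q$. The one point worth recording is that, since $\overline l$ is a unit modulo $q$, multiplication by it does not change the relevant divisor: $(a',q)=(a\overline l,q)=(a,q)$.

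Next I would apply Lemma \ref{weil} to $T(l)$ with $Y=M$ and $Z=2M$, so that $Z-Y=M$. This gives $T(l)\ll_\epsilon\bigl((a,q)(M/q+1)+q^{1/2}\bigr)q^\epsilon$, uniformly in $l$, where I have used the gcd identity above to replace $(a',q)$ by $(a,q)$.

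Finally, using $|\alpha_l|\le 1$ together with the fact that there are at most $O(L)$ values of $l$, and that $q\asymp Q$ throughout this section, I would sum trivially over $l$ to obtain $W\ll_\epsilon L\bigl((a,q)(M/Q+1)+Q^{1/2}\bigr)Q^\epsilon$, which is exactly the claimed bound after distributing the factor $L$. There is no genuine obstacle here: the argument is a direct appeal to the Weil estimate followed by a trivial summation, and the only substantive remark is the identity $(a\overline l,q)=(a,q)$ used in the reduction.
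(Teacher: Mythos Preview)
Your proof is correct and follows exactly the same route as the paper: bound $|W|$ by $\sum_{l}\bigl|\sum_{m}e(a\overline{lm}/q)\bigr|$, apply Lemma~\ref{weil} to the inner sum, and sum trivially over $l$. You have simply made explicit the reduction $\overline{lm}\equiv\overline l\,\overline m\pmod q$ and the identity $(a\overline l,q)=(a,q)$, which the paper leaves implicit.
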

\begin{proof}
We have 
$$W\leq \twosum{l\sim L}{(l,q)=1}\left|\twosum{m\sim M}{(m,q)=1}e(\frac{a\overline{lm}}{q})\right|.$$
The result follows on applying Lemma \ref{weil} to the inner sum.
\end{proof}

Summing this result over $q\sim Q$ we immediately get the following.

\begin{lem}\label{typei}
Suppose that $L,M,Q\geq 1$ 
and that $\beta_m=1$.  For any $\epsilon>0$ we have 
$$\sum_{q\sim Q}\max_{(a,q)=1}|W_{a,q}|\ll_\epsilon (LM+Q^{\frac32}L)Q^\epsilon.$$
In addition if $a>0$ then we have 
$$\sum_{q\sim Q}|W_{a,q}|\ll_\epsilon (LM+Q^{\frac32}L)(aQ)^\epsilon.$$
\end{lem}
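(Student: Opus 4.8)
The plan is to apply the preceding pointwise bound for Type I sums and simply sum over $q\sim Q$. For the first inequality the maximum is restricted to $(a,q)=1$, so in that range the gcd factor $(a,q)$ equals $1$ and the pointwise bound collapses to $\max_{(a,q)=1}|W_{a,q}|\ll_\epsilon(LM/Q+L+Q^{\frac12}L)Q^\epsilon$. Since there are $O(Q)$ values of $q$ in the range $q\sim Q$, summing multiplies this by $Q$, giving a bound $\ll_\epsilon(LM+QL+Q^{\frac32}L)Q^\epsilon$. As $Q\geq 1$ we have $QL\leq Q^{\frac32}L$, so the middle term is absorbed and the first bound follows.

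For the second inequality the gcd factor must be retained, since we no longer restrict to $(a,q)=1$, and so the one genuinely nonroutine step is to average it. I would use the standard estimate $\sum_{q\sim Q}(a,q)\ll_\epsilon Qa^\epsilon$. This follows by writing $(a,q)=\sum_{d\mid(a,q)}\phi(d)$, interchanging the order of summation over $d$ and $q$, and bounding the number of $q\sim Q$ divisible by $d$ by $O(Q/d)$; the resulting sum is $\ll\sum_{d\mid a}\phi(d)(Q/d)\leq Q\tau(a)$, which is $\ll_\epsilon Qa^\epsilon$ by the divisor bound. Applying this to the term carrying the factor $(a,q)$, and bounding the $Q^{\frac12}L$ term by the trivial count of $O(Q)$ values of $q$, yields $\sum_{q\sim Q}|W_{a,q}|\ll_\epsilon\bigl(a^\epsilon(LM+QL)+Q^{\frac32}L\bigr)Q^\epsilon$. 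Absorbing $a^\epsilon Q^\epsilon$ into $(aQ)^\epsilon$ and again using $QL\leq Q^{\frac32}L$ gives the stated second bound.

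There is no real obstacle here: the substantive content sits entirely in the preceding pointwise estimate, and the only new input is the averaged gcd bound, which is completely standard. The passage to the average is genuinely immediate, as claimed in the text, so I would expect the written proof to be only a few lines.
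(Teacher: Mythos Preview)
Your argument is correct and is exactly what the paper intends by ``summing this result over $q\sim Q$'': for the first bound the condition $(a,q)=1$ kills the gcd factor, and for the second bound the only extra ingredient is the standard estimate $\sum_{q\sim Q}(a,q)\ll_\epsilon Qa^\epsilon$, which you supply. The paper does not spell out the gcd averaging, but your treatment of it is the natural and expected one.
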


\section{Proof of the Theorems} 

In the sums $S_q(a;x)$ we replace the indicator function of the primes with the von Mangoldt function $\Lambda(n)$.  The contribution of prime powers $p^\alpha$ with $\alpha>1$ is $\ll_\epsilon x^{\frac12+\epsilon}$.  This is smaller than any of the bounds we will establish so it may be ignored.  In addition the factor $\log p$ may be removed using partial summation with the cost of a factor $x^\epsilon\ll Q^\epsilon$.   It is thus sufficient to establish the theorems for the sums containing $\Lambda$.

We decompose $\Lambda(n)$ using Vaughan's Identity, as described by
Davenport in \cite[Chapter 24]{dav}.  We will use $U=V\leq
x^{\frac13}$; the precise choice of $U$ for each theorem will be given
later.  The sum $S_q(a;x)$ is decomposed into Type I and II sums with
$LM\asymp x$.  The precise forms of the sums are given by Fouvry and
Shparlinski in \cite{foushpar}.  The coefficients are not all bounded
by $1$ but they are bounded by a divisor function.  This divisor
function may be absorbed into the $Q^\epsilon$ term.  We must estimate
Type I sums for $L\leq U^2$ and Type II sums for $U\leq L\leq x/U$.
 Since $U^2\le x/U$ any Type I sum with $U\le L\le L^2$ may be
regarded as a Type II sum.  Hence it will be enough to consider Type
I sums with $L\leq U$ and Type II sums with $U\leq L\leq x/U$.  The variables of summation are restricted by the condition $lm\sim x$.
In the Type II sums this may be removed by Fourier analysis, see for
example the start of Garaev's proof, \cite[Lemma 2.4]{garaev}. 
 For the Type I sums, if we are simply treating them as Type II sums
 then the same argument applies, whereas if we are using Lemma
 \ref{typei} then it is clear that a condition $lm\sim x$ can be
 introduced by modifying the proof. 

\subsection{Proof of Theorem \ref{thm1}}

The sums arising from Vaughan's identity are of the form 
$$\sum_{q\sim Q}\max_{(a,q)=1}|W_{a,q}|.$$
We estimate the Type II sums using Lemma \ref{typeiimax}.  We have 
$$L,M\leq \frac{x}{U}$$
and thus the hypotheses are satisfied provided that our choice of $U$ satisfies $\frac xU\leq Q$.  Recalling that $M\ll x/L$ the bound from Lemma \ref{typeiimax} is 
$$Q(Q^{\frac{1}{2k}}x^{\frac12}L^{\frac{k-1}{2k}}+xL^{\frac{-1}{2k}})Q^\epsilon.$$
For $x^{\frac25}\leq L\leq x^{\frac12}$ we apply this with $k=2$ to get a bound of 
$$Q(Q^{\frac{1}{4}}x^{\frac58}+x^{\frac{9}{10}})Q^\epsilon.$$
For $x^{\frac35}\leq L\leq x/U$ we use  $k=3$ to get 
$$Q(Q^{\frac{1}{6}}x^{\frac56}U^{-\frac13}+x^{\frac{9}{10}})Q^\epsilon.$$
Since $LM\asymp x$ and we can interchange $l,m$ in our sums these two bounds in fact cover the whole range $U\leq L\leq x/U$.  The contribution of all our Type II sums is therefore 
$$\ll_\epsilon Q(Q^{\frac{1}{4}}x^{\frac58}+x^{\frac{9}{10}}+Q^{\frac16}x^{\frac56}U^{-\frac13})Q^\epsilon.$$

We need to estimate Type I sums for $L\leq U$.  Lemma \ref{typei} gives a bound for these sums of 
$$(x+Q^{\frac32}U)Q^\epsilon.$$
Since $x\leq Q^{\frac32}$ and $U\geq 1$ the second term is larger and thus 
$$\sum_{q\sim Q}\max_{(a,q)=1}|S_q(a;x)|\ll_\epsilon Q(Q^{\frac{1}{4}}x^{\frac58}+x^{\frac{9}{10}}+Q^{\frac16}x^{\frac56}U^{-\frac13}+Q^{\frac12}U)Q^\epsilon.$$
We now choose 
$$U=\min(x^{\frac13},Q^{-\frac14}x^{\frac58}).$$
Since $Q^{\frac23}\leq x\leq Q^{\frac32}$ we have 
$$1\leq U\leq x^{\frac13}$$
and 
$$\frac{x}{U}\leq Q.$$
This choice of $U$ is therefore admissible so we can conclude that 
$$\sum_{q\sim Q}\max_{(a,q)=1}|S_q(a;x)|\ll_\epsilon Q(Q^{\frac{1}{4}}x^{\frac58}+x^{\frac{9}{10}}+Q^{\frac16}x^{\frac{13}{18}})Q^\epsilon,$$
thus proving Theorem \ref{thm1}.

It should be noted that we have not used the full power of Lemma \ref{typeiimax} and thus our result could be improved for certain sizes of $x$.  In particular as $x$ decreases it becomes beneficial to use larger  values of $k$ in the lemma.  Theorem \ref{thm1} gives a nontrivial estimate provided that $x\geq Q^{\frac23+\delta}$ for some $\delta>0$.  By using larger values of $k$ it should be possible to get a result for $x\geq Q^{\frac12+\delta}$.  When $x$ is a fixed power of $Q$ it is not difficult to enumerate all the different bounds coming from Lemma \ref{typeiimax} and work out what the largest term is.  However, it seems that the optimal result for all $x$ would be very awkward to write down and prove.

\subsection{Proof of Theorem \ref{ternary}}

It follows from Theorem \ref{thm1} that for every $\epsilon>0$ there exists a $\delta>0$ such that 
\begin{equation}\label{ternarybound}
\sum_{q\sim Q}\max_{(a,q)=1}|S_q(a;x)|\ll x^{2-\delta}
\end{equation}
for $x\leq Q\leq x^{\frac{23}{21}-\epsilon}$.  In general, if we have a constatnt $\alpha\in (1,2)$ such that the estimate (\ref{ternarybound}) holds for $x\leq Q\leq x^\alpha$ then Baker's analysis, \cite{baker}, shows that we may  replace $1.167\ldots$ by the solution, $\theta$, of the equation 
$$2\theta-\alpha-2+2(2-\alpha)\log\left(\frac{\theta+\alpha-2}{2\alpha-2}\right)=0.$$
When $\alpha=\frac{23}{21}$ this equation becomes 
$$42\theta-65+38\log\left(\frac{21\theta-19}{4}\right)=0.$$
The root of this is $\theta=1.188\ldots$, as given in Theorem \ref{ternary}.

\subsection{Proof of Theorem \ref{thm2}}

The sums arising from Vaughan's identity are now of the form 
$$\sum_{q\sim Q}|W_{a,q}|.$$
We estimate the Type II sums using Lemma \ref{typeiinomax}.  Since $LM\asymp x$ this gives a bound 
$$(1+\frac{a}{xQ})^{\frac12}(Qx^{\frac12}L^{\frac12}+Q^{\frac12}x^{\frac32}L^{-\frac14})(aQ)^\epsilon.$$
It is sufficient to estimate the Type II sums for $x^{\frac12}\leq L\leq x/U$.  In this range we get a bound of 
$$(1+\frac{a}{xQ})^{\frac12}(QxU^{-\frac12}+Q^{\frac12}x^{\frac{11}{8}})(aQ)^\epsilon.$$
Lemma \ref{typei} gives us a bound for the Type I sums with $L\leq U$ of $Q^{\frac32+\epsilon}Ua^\epsilon$.  We now choose 
$$U=\min(x^{\frac13},Q^{-\frac13}x^{\frac23}).$$
Since $x\geq Q^{\frac12}$ we have 
$$1\leq U\leq x^{\frac13}.$$
This choice of $U$ is therefore admissible and we get 
$$\sum_{q\sim Q}|S_q(a;x)|\ll_\epsilon (1+\frac{a}{xQ})^{\frac12}(Qx^{\frac56}+Q^{\frac76}x^{\frac23}+Q^{\frac12}x^{\frac{11}{8}})(aQ)^\epsilon.$$
If $x\leq  Q$ then 
$$Qx^{\frac56}\leq Q^{\frac76}x^{\frac23}$$
and if $x\geq Q$ then 
$$Qx^{\frac56}\leq Q^{\frac12}x^{\frac{11}{8}}.$$
The term $Qx^{\frac56}$ is therefore not necessary in our bound so Theorem \ref{thm2} follows.

\newpage
\bibliographystyle{plain}
\bibliography{biblio} 

\bigskip
\bigskip

Mathematical Institute,

24--29, St. Giles',

Oxford

OX1 3LB

UK
\bigskip

{\tt irving@maths.ox.ac.uk}

\end{document}